\newtheorem{theorem}{Theorem}
\newtheorem{lemma}[theorem]{Lemma}
\newtheorem{proposition}[theorem]{Proposition}
\newcommand{\ex}{\mathrm{ex}}
\newcommand{\exc}{\mathrm{exc}}
\begin{document}
\newcommand{\Addresses}{{
\bigskip
\footnotesize
\medskip

\noindent David~Ellis, \textsc{School of Mathematics, University of Bristol, The Fry Building, Woodland Road, Bristol, BS8 1UG, United Kingdom.}\par\noindent\nopagebreak\textit{Email address: }\texttt{david.ellis@bristol.ac.uk}

\medskip 

\noindent Maria-Romina~Ivan, \textsc{Department of Pure Mathematics and Mathematical Statistics, Centre for Mathematical Sciences, Wilberforce Road, Cambridge, CB3 0WB, United Kingdom.}\par\noindent\nopagebreak\textit{Email address: }\texttt{mri25@dpmms.cam.ac.uk}

\medskip

\noindent Imre~Leader, \textsc{Department of Pure Mathematics and Mathematical Statistics, Centre for Mathematical Sciences, Wilberforce Road, Cambridge, CB3 0WB, United Kingdom.}\par\noindent\nopagebreak\textit{Email address: }\texttt{i.leader@dpmms.cam.ac.uk}}}

\pagestyle{fancy}
\fancyhf{}
\fancyhead [LE, RO] {\thepage}
\fancyhead [CE] {DAVID ELLIS, MARIA-ROMINA IVAN AND IMRE LEADER}
\fancyhead [CO] {TUR\'AN DENSITIES FOR SMALL HYPERCUBES}
\renewcommand{\headrulewidth}{0pt}
\renewcommand{\l}{\rule{6em}{1pt}\ }
\title{\Large{\textbf{TUR\'AN DENSITIES FOR SMALL HYPERCUBES}}}
\author{DAVID ELLIS, MARIA-ROMINA IVAN AND IMRE LEADER}
\date{}
\maketitle
\begin{abstract}
How small can a set of vertices in the $n$-dimensional hypercube $Q_n$ be if it meets every
copy of $Q_d$? The asymptotic density of such a set (for $d$ fixed and $n$ large) is denoted by
$\gamma_d$. It is easy to see that $\gamma_d \leq 1/(d+1)$, and it is known that $\gamma_d=1/(d+1)$ for $d \leq 2$, but it was recently shown that $\gamma_d < 1/(d+1)$ for
$d \geq 8$. In this paper we show that the latter phenomenon also holds for $d=7$ and $d=6$.
\end{abstract}
\section{Introduction}
As usual, we let $Q_n$ denote the graph of the $n$-dimensional discrete cube, i.e., the graph whose vertex-set is $\{0,1\}^n$, and where two 0-1 vectors are joined by an edge if they differ in just one coordinate. For $d,n \in \mathbb{N}$, we define 
$$\exc(n,Q_d)= \max\{|\mathcal F|:\ \mathcal F \subset \{0,1\}^n,\ \mathcal F \text{ is }Q_d\text{-free}\}.$$ In other words, $\exc(n,Q_d)$ is the maximum possible size of a set of vertices in $Q_n$ that does not contain the vertex-set of a $d$-dimensional subcube. We define the \textit{vertex-Tur\'an density of $Q_d$} to be
$$\lambda(Q_d) = \lim_{n \to \infty}\frac{\exc(n,Q_d)}{2^n}.$$ A standard averaging argument shows that the above sequence of quotients is monotone decreasing, so the limit does indeed exist.
\par It is in fact more convenient to consider the (equivalent) `complementary' problem: what is the minimal size $g(n,d)$ of a subset of $\{0,1\}^n$ that intersects the vertex-set of every $d$-dimensional subcube? Writing
$\gamma_d = \lim_{n \to \infty} g(n,d)/2^n$, we have $\gamma_d = 1-\lambda(Q_d)$.\\
\par Note that we certainly have $\gamma_d \leq 1/(d+1)$, as identifying $\{0,1\}^n$ with $\mathcal{P}([n])$ in the usual way, we may take the collection of all sets of
size 0 mod $d+1$. It was conjectured  (see \cite{blm}) that $\gamma_d = 1/(d+1)$ for every integer $d \geq 1$, but this was disproved in \cite{eil} for all $d \geq 8$. On the other hand, it was proved by Johnson and
Entringer \cite{je} and also by Kostochka \cite{k} that $\gamma_2 = 1/3$, and it is trivial that $\gamma_1=1/2$. So it is of great interest
to understand the cases $3 \leq d \leq 7$.
\par Our aim in this paper is to resolve the question for $d=7$ and $d=6$. We show that $\gamma_d$ is strictly less than $1/(d+1)$ for these values. Our proofs build on the linear algebra approach in \cite{eil}. For $d=7$, the argument is similar to the arguments in \cite{eil}, with one crucial extension. But for $d=6$ the argument is considerably more involved. One of its main ingredients is an adaptation of an idea in Grebennikov and Marciano \cite{gm}, which we will describe when we come to the proof below.
\par It turns out that the notion of a `daisy' is key to understanding the hypercube Tur\'an problem. As usual, if $X$ is a set and $r \in \mathbb{N}$, we write $X^{(r)}$ for the set of all $r$-element subsets of $X$. For $s,t \in \mathbb{N}$ with $t \geq s$, we define an \textit{$(r,s,t)$-daisy} to be an $r$-uniform hypergraph of the form $\{S \cup X:\ X \in T^{(s)}\}$, where $S$ is an $(r-s)$-element set and $T$ is a $t$-element set disjoint from $S$. We denote this hypergraph by $\mathcal D_r(s,t)$, and call $S$ the \textit{stem} of the daisy. (We sometimes also call $T$ the `petal-set', as the sets $X$ in the definition are naturally viewed as petals.) For background on daisies see \cite{blm} and \cite{eil}, although we stress that this paper is essentially self-contained.
\par It would be extremely interesting to know what happens for $d=3$, $4$ and $5$. The case of $d=3$ is of especial interest, because that is a case where it is known that direct daisy constructions (as in Section 2 below, or in \cite{eil}) cannot apply -- see \cite{eil} for details. We do not know if any of the ideas in Section 3 below can help with this question.

\section{The Tur\'an density of $Q_7$}

\par We start with the case of $Q_7$. Here the key new idea is as follows: the construction below avoids
not only an `ordinary' $\mathcal D_r(2,t)$ daisy but also the daisy that would correspond to the `opposite part of a hypercube', namely a $\mathcal D_r(t-2,t)$ daisy. This is what allows us to use a more economical family in $Q_n$ than was used in \cite{eil}.

First, some standard terminology. In what follows, we will often identify $\{0,1\}^n$ with the power-set $\mathcal{P}([n])$ in the usual way (that is, by identifying a set $S \subset [n]$ with its indicator vector). The discrete cube $\{0,1\}^n \cong \mathcal{P}([n])$ splits up naturally into layers, the $r$th layer being (identified with) $[n]^{(r)}$. We will sometimes refer to this layer as the layer at `level' $r$.

\begin{proposition} Let $t$ be an integer such that $t-2$ is a prime power, say $q$. Recall that $\prod_{k=1}^{\infty}(1-q^{-k})$ is a positive constant. Then for any positive integer $r$ and any integer $n\geq r$, there exists a family $\mathcal F_n^r$ of $r$-element subsets of $[n]$ such that $\mathcal F_n^r$ is $\mathcal D_r(2,t)$-free, $\mathcal F_n^r$ is $\mathcal D_r(t-2,t)$-free, and
$$\frac{|\mathcal F_n^r|}{{n \choose r}}\geq\prod_{k=1}^{\infty} (1-q^{-k}).$$
\label{dual}
\end{proposition}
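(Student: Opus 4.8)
The plan is to take $\mathcal{F}_n^r$ to consist of the $r$-sets whose elements receive a basis of $\mathbb{F}_q^r$ under a suitable colouring of $[n]$. Precisely, since $q=t-2$ is a prime power, fix the vector space $V=\mathbb{F}_q^r$ over $\mathbb{F}_q$, choose a map $\phi\colon[n]\to V$, and set
$$\mathcal{F}_n^r=\bigl\{A\in[n]^{(r)}:(\phi(i))_{i\in A}\text{ is a basis of }V\bigr\}.$$
The two daisy-freeness properties will turn out to hold for \emph{every} choice of $\phi$, and only the density bound requires choosing $\phi$ well; so the last step is simply to exhibit a good $\phi$. For that, take $\phi$ uniformly at random among all maps $[n]\to V$. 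For a fixed $A\in[n]^{(r)}$ the vectors $(\phi(i))_{i\in A}$ are $r$ independent uniform elements of $V$, so $\Pr[A\in\mathcal{F}_n^r]$ equals the number of ordered bases of $V$ divided by $|V|^r=q^{r^2}$, namely $\prod_{k=1}^{r}(1-q^{-k})\ge\prod_{k=1}^{\infty}(1-q^{-k})$. Linearity of expectation gives $\mathbb{E}\,|\mathcal{F}_n^r|\ge\binom{n}{r}\prod_{k=1}^{\infty}(1-q^{-k})$, so we may fix $\phi$ with $|\mathcal{F}_n^r|\ge\binom{n}{r}\prod_{k=1}^{\infty}(1-q^{-k})$.

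Next I would prove $\mathcal{D}_r(2,t)$-freeness for any $\phi$. Suppose $\mathcal{F}_n^r$ contained a $\mathcal{D}_r(2,t)$ with stem $S$ (so $|S|=r-2$) and petal-set $T$ (so $|T|=t=q+2$). Since $S\cup\{a,b\}\in\mathcal{F}_n^r$ for every pair $a\ne b$ in $T$, the vectors $(\phi(i))_{i\in S}$ form part of a basis, hence span an $(r-2)$-dimensional subspace $W\le V$; passing to $V/W\cong\mathbb{F}_q^2$, membership of all these $S\cup\{a,b\}$ forces the images $\phi(a)+W$, for $a\in T$, to be $q+2$ nonzero, pairwise non-proportional vectors of $\mathbb{F}_q^2$. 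This is impossible, since $\mathbb{F}_q^2$ has only $q+1$ one-dimensional subspaces.

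Finally comes the $\mathcal{D}_r(t-2,t)$-freeness, which is the crux of the argument (the ``crucial extension'' over \cite{eil}). Suppose $\mathcal{F}_n^r$ contained a $\mathcal{D}_r(t-2,t)$ with stem $S'$ (so $|S'|=r-q$) and petal-set $T$ (so $|T|=q+2$). As before, $(\phi(i))_{i\in S'}$ spans an $(r-q)$-dimensional $W'\le V$, and modulo $W'$ we obtain $q+2$ vectors of $V/W'\cong\mathbb{F}_q^q$ with the property that every $q$ of them form a basis. After a linear automorphism we may take $q$ of these to be the standard basis $e_1,\dots,e_q$ and the other two to be $v,w$; the basis condition applied to $\{e_k:k\ne i\}\cup\{v\}$, to $\{e_k:k\ne i\}\cup\{w\}$, and to $\{e_k:k\ne i,j\}\cup\{v,w\}$ then forces every coordinate of $v$ and of $w$ to be nonzero and the ratios $v_1/w_1,\dots,v_q/w_q$ to be pairwise distinct. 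But those are $q$ distinct elements of $\mathbb{F}_q^{\times}$, which has only $q-1$ elements, a contradiction. (This is the dual form of the statement that there is no set of $q+2$ vectors in general position in $\mathbb{F}_q^{q}$; it is exactly what lets a single ``basis'' family avoid the opposite daisy for free.) The one genuinely substantive point is the choice of this family together with the realisation that it dodges \emph{both} daisies simultaneously; once that is in hand, every verification above is routine counting and linear algebra.
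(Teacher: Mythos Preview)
Your proof is correct. The daisy-freeness arguments are essentially the paper's: quotient by the span of the stem and use, respectively, that $\mathbb{F}_q^2$ has only $q+1$ lines through the origin, and that $\mathbb{F}_q^q$ admits no $q+2$ vectors in general linear position. (Your ratio argument for the latter is a mild rephrasing of the paper's pigeonhole on coordinates after rescaling one of the two extra vectors to the all-ones vector.)

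Where you genuinely differ is in establishing the density bound for all $n\ge r$. The paper first builds the family only for $n=q^r-1$, via the bijection $[n]\leftrightarrow\mathbb{F}_q^r\setminus\{0\}$; it then blows this up to handle $n=m(q^r-1)$, and finally invokes two separate monotonicity arguments (in $n$ and in $r$, letting $r\to\infty$) to extend to every $n$ and to remove the $\binom{r}{2}/(q^r-1)$ error term coming from the blow-up. Your random colouring $\phi:[n]\to\mathbb{F}_q^r$ sidesteps all of this: for any $n$, each $r$-set is a basis with probability exactly $\prod_{k=1}^{r}(1-q^{-k})\ge\prod_{k=1}^{\infty}(1-q^{-k})$, and a single first-moment step produces the required family directly, with no error term to chase away. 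This is a cleaner and shorter route to the same conclusion; the paper's approach, by contrast, is fully explicit and also records the monotonicity of the extremal density in $n$ and in $r$ as by-products.
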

\begin{proof} Let $\ex(n,r,t)$ be the size of the largest family that is both $\mathcal D_r(2,t)$-free and $\mathcal D_r(t-2,t)$-free. Let $\mathcal G$ be a family of $r$-element subsets of $[n]$ that satisfies the above two conditions and is of maximal size, i.e., $|\mathcal{G}| = \ex(n,r,t)$. Note then that $\mathcal G_i=\{A\in\mathcal G:i\notin A\}$ is a family that is free of the two forbidden daisies, with ground set of size $n-1$, so $|\mathcal G_i|\leq \ex(n-1,r,t)$. Summing over all $i\in[n]$ we get $(n-r)|\mathcal G|\leq n \, \ex(n-1,r,t)$, showing that the sequence $\ex(n,r,t)/\binom{n}{r}$ is decreasing in $n$. Therefore, it is enough to find families with the required properties for arbitrarily large $n$.
\par To start with, let $n=q^r-1$ and identify $[n]$ with $\mathbb{F}_q^r \setminus \{0\}$, where $\mathbb{F}_q$ is the field of order $q$. Take $\mathcal F$ to consist of all bases of $\mathbb{F}_q^r$. It is easy to see that
\begin{equation}\label{eq:bases-estimate}\frac{|\mathcal{F}|}{{n \choose r}} > \prod_{k=1}^{r}(1-q^{-k});\end{equation}
for a proof of this, see \cite{eil}.

\par We will show that $\mathcal F$ is both $\mathcal D_r(2,t)$-free and $\mathcal D_r(t-2,t)$-free. This will follow from the fact that any copy of a $\mathcal D_r(2,t)$-daisy or a $\mathcal D_r(t-2,t)$-daisy in $(\mathbb{F}_q^r \setminus \{0\})^{(r)}$ must contain a linearly dependent set. 
\par Suppose for a contradiction that we have a copy of a $\mathcal D_r(2,t)$-daisy in $(\mathbb{F}_q^r \setminus \{0\})^{(r)}$, whose $r$-sets are all bases of $\mathbb{F}_q^r$. Let $S = \{v_1,v_2,\dots,v_{r-2}\}$ denote the stem of this daisy; then $S$ is a linearly independent set. Let $W$ denote the subspace spanned by $S$, so $W$ is an $(r-2)$-dimensional subspace of $\mathbb{F}_q^r$, and $\dim(\mathbb{F}_q^r/W)=2$. Let $u_1,u_2,\dots,u_{q+2}$ denote the other $t=q+2$ vertices of the daisy.
\par Since ${u_i, u_j,v_1,\dots,v_{r-2}}$ form a basis for all $1\leq i < j \leq q+2$, the vectors $u_1+W,u_2+W,\ldots,u_{q+2}+W$ can be viewed as $q+2$ vectors in $\mathbb{F}_q^2$ of which any two are linearly independent, i.e., no two are on the same line through the origin. But this is impossible, as the $q+1$ lines through the origin in $\mathbb{F}_q^2$ cover $\mathbb{F}_q^2$.

\par Similarly, suppose for a contradiction that we have a copy of a $\mathcal D_r(t-2,t)$-daisy in $\mathcal F$. Let $A=\{y_1,\dots,y_{r-q}\}$ be its stem and $B=\{x_1,\dots,x_{q+2}\}$ be its other vertices. Let $W'$ denote the subspace spanned by $A$, so $W'$ is an $(r-q)$-dimensional subspace of $\mathbb{F}_q^r$, and $\dim(\mathbb{F}_q^r/W') = q$. Then the vectors $x_1+W',x_2+W',\ldots,x_{q+2}+W'$ can be viewed as $q+2$ vectors in $\mathbb{F}_q^q$ of which any $q$ are linearly independent. But this is also easily seen to be impossible. Indeed, relabelling for clarity, suppose for a contradiction that $z_1,\ldots,z_{q+2}$ are $q+2$ vectors in $\mathbb{F}_q^q$ of which any $q$ are linearly independent. We may assume, by change of basis if necessary, that $z_1,\ldots,z_q$ are the first $q$ unit vectors (in order). Then $z_{q+1}$ cannot have any entry equal to zero (if the $i$th entry of $z_{q+1}$ were equal to zero, then $\{z_1,\ldots,z_q,z_{q+1}\}\setminus \{z_i\}$ would be linearly dependent, a contradiction). Hence, we may assume, by rescaling the first $q$ vectors if necessary, that $z_{q+1}$ is the all-ones vector. But then, by the same argument, $z_{q+2}$ cannot have any entry equal to zero, so by the pigeonhole principle two of its entries (say the $i$th and the $j$th) must be equal. By rescaling $z_{q+2}$, we may assume that these two entries are equal to 1. But then $z_{q+2}-z_{q+1} \in \text{Span}(\{z_1,\ldots,z_q\}\setminus\{z_i,z_j\})$ so $\{z_1,\ldots,z_{q+2}\}\setminus\{z_i,z_j\}$ is linearly dependent, a contradiction.

\par For $n=m(q^r-1)$, where $m$ is any positive integer, the same blow-up construction and estimates as in the proof of Theorem 5 in \cite{eil} now give a family $\mathcal F$ that is both $\mathcal D_r(2,t)$-free and $\mathcal D_r(t-2,t)$-free and has density $|\mathcal F|/\binom{n}{r}>\left(1-\binom{r}{2}/(q^r-1)\right)\prod_{k=1}^r(1-q^{-k})$. Indeed, partition $[n]$ into $q^r-1$ classes of equal size, $C_1,\ldots,C_{q^r-1}$ say, and consider the family $\mathcal G$ consisting of all $r$-element subsets of $[n]$ of the form $\{x_1,\ldots,x_r\}$, where $x_i \in C_{j_i}$ for all $i \in [r]$ and $\{j_1,j_2,\ldots,j_r\}$ 
is some element of $\mathcal F$ (the above $\mathcal D_r(2,t)$-free and $\mathcal D_r(t-2,t)$-free family on $[q^r-1]$). Clearly $\mathcal G$ is both $\mathcal D_r(2,t)$-free and $\mathcal D_r(t-2,t)$-free, since $\mathcal F$ is. Since the probability that a uniformly random $r$-element subset of $[n]$ has at least two points in some $C_i$ is at most $\frac{{r \choose 2}}{q^r-1}$, the size of $\mathcal G$ is at least
$$\left(1-\frac{\binom{r}{2}}{q^r-1}\right)|\mathcal F|>\left(1-\frac{\binom{r}{2}}{q^r-1}\right)\prod_{k=1}^r(1-q^{-k}){n \choose r},$$
using (\ref{eq:bases-estimate}).
\par Since $m$ can be taken arbitrarily large, it follows from the observation at the beginning of the proof that $\ex(n,r,t)/\binom{n}{r}>\left(1-\binom{r}{2}/(q^r-1)\right)\prod_{k=1}^r(1-q^{-r})$ for all $n\geq r$.
\par An easy averaging argument shows that $(r+1)\ex(n+1,r+1,t)\leq (n+1)\ex(n,r,t)$, which implies that $\ex(n+1, r+1,t)/\binom{n+1}{r+1}\leq \ex(n,r,t)/\binom{n}{r}$. Therefore we must have
$$\ex(n,r,t)/\binom{n}{r}\geq \ex(n+s,r+s,t)/\binom{n+s}{r+s}>\left(1-\frac{\binom{r+s}{2}}{q^{r+s}-1}\right)\prod_{k=1}^{r+s}(1-q^{-k})$$
for all $s\in \mathbb{N}$. Letting $s\rightarrow\infty$, we obtain $ex(n,r,t)/\binom{n}{r}\geq\prod_{k=1}^{\infty}(1-q^{-k})$, as required.
\end{proof}

\begin{theorem}
\label{thm:q7} The vertex-Tur\'an density of $Q_7$ is greater than $7/8$. Equivalently, $\gamma_7<1/8$.
\end{theorem}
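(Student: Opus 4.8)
The plan is to exhibit, for each large $n$, a $Q_7$-free family $\mathcal F\subseteq\pn$ of density $>7/8$; since $\exc(n,Q_7)\ge|\mathcal F|$ and $\gamma_7=1-\lambda(Q_7)=1-\lim_n\exc(n,Q_7)/2^n$, this yields $\gamma_7<1/8$. The construction is layered, and rests on the standard dictionary between subcubes and daisies: a copy of $Q_7$ in $\pn$ is a combinatorial interval $[B,B\cup D]$ with $B\cap D=\emptyset$ and $|D|=7$, and its trace on the level-$(|B|+j)$ layer, for $0\le j\le 7$, is precisely a copy of the daisy $\mathcal D_{|B|+j}(j,7)$ with stem $B$ and petal-set $D$. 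Hence a family avoids this particular copy of $Q_7$ as soon as, on some level $|B|+j$, it fails to contain that copy of $\mathcal D_{|B|+j}(j,7)$.

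Now I would apply Proposition~\ref{dual} with $t=7$, so that $q=t-2=5$ is a prime power; set $\delta:=\prod_{k=1}^\infty(1-5^{-k})$. For each even $r$ let $\mathcal F_n^r$ be the family on $[n]$ given by the proposition, so $\mathcal F_n^r$ is simultaneously $\mathcal D_r(2,7)$-free and $\mathcal D_r(5,7)$-free and $|\mathcal F_n^r|\ge\delta\binom nr$ (for $r=0$ just take the whole layer). Define $\mathcal F$ to consist of every subset of $[n]$ of odd size, together with all the sets in the families $\mathcal F_n^r$ over even $r$. The crucial claim is that $\mathcal F$ is $Q_7$-free, and this is exactly where the ``opposite daisy'' from the proposition earns its keep. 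Given a copy $[B,B\cup D]$ of $Q_7$, put $\ell=|B|$, so its layers are $\ell,\ell+1,\dots,\ell+7$. If $\ell$ is even then so is $\ell+2$, and on that layer $\mathcal F$ agrees with the $\mathcal D_{\ell+2}(2,7)$-free family $\mathcal F_n^{\ell+2}$; since the trace of the cube there is a copy of $\mathcal D_{\ell+2}(2,7)$, some vertex of the cube at level $\ell+2$ lies outside $\mathcal F$. If $\ell$ is odd then $\ell+5$ is even, $\mathcal F$ agrees on that layer with the $\mathcal D_{\ell+5}(5,7)$-free family $\mathcal F_n^{\ell+5}$, and the trace there is a copy of $\mathcal D_{\ell+5}(5,7)$, so again a vertex of the cube is missing. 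Either way the cube is not contained in $\mathcal F$, so $\mathcal F$ is $Q_7$-free. Note that forbidding only the ordinary daisy $\mathcal D_r(2,7)$ on the even layers would block only the cubes with $\ell$ even; having the opposite daisy $\mathcal D_r(5,7)$ available is precisely what lets a single parity class of layers catch every cube (one always lands on position $j\in\{2,5\}$, never on the useless positions $j\in\{0,1,6,7\}$).

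It then remains to estimate the density and check a numerical inequality. Since $\sum_{r\ \mathrm{odd}}\binom nr=\sum_{r\ \mathrm{even}}\binom nr=2^{n-1}$, we get $|\mathcal F|\ge 2^{n-1}+\delta\,2^{n-1}=(1+\delta)2^{n-1}$, so $\exc(n,Q_7)\ge(1+\delta)2^{n-1}$ and hence $\lambda(Q_7)\ge(1+\delta)/2$, i.e. $\gamma_7\le(1-\delta)/2$. Finally one checks $\delta>3/4$: bounding the tail by $\prod_{k\ge4}(1-5^{-k})\ge 1-\sum_{k\ge4}5^{-k}=1-\tfrac1{500}$ gives $\delta\ge(1-\tfrac15)(1-\tfrac1{25})(1-\tfrac1{125})(1-\tfrac1{500})>0.76$, whence $\lambda(Q_7)>7/8$ and $\gamma_7<1/8$. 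The genuinely new ingredient here is the construction in the middle paragraph — in particular the observation that avoiding the opposite daisy $\mathcal D_r(t-2,t)$ as well allows one to perturb only the even layers, which is more economical than the family used in \cite{eil}; the only thing that could go wrong is the arithmetic, and it works out because $\prod_{k\ge1}(1-5^{-k})$ comfortably exceeds $3/4$.
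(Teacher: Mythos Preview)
Your argument is correct. Both your proof and the paper's rest on Proposition~\ref{dual}, but you instantiate it differently. You take $t=7$ (so $q=5$), put the families $\mathcal F_n^r$ on the \emph{even} layers and the full layer on the odd ones, and use the parity dichotomy: for a cube starting at an even level you hit its layer $j=2$, and for a cube starting at an odd level you hit its layer $j=5$. This gives $\gamma_7\le(1-\delta)/2$ with $\delta=\prod_{k\ge1}(1-5^{-k})\approx0.760$, hence $\gamma_7\le0.12<1/8$. The paper instead takes $t=6$ (so $q=4$), places the \emph{complements} of the $\mathcal F_n^r$ on the layers $r\equiv3\pmod4$ only, and exploits the extra containments $\mathcal D_r(3,7)\supset\mathcal D_r(2,6)$ and $\mathcal D_r(4,7)\supset\mathcal D_r(4,6)$ so that any of the four middle layers $j\in\{2,3,4,5\}$ of a $Q_7$ can serve as the hitting layer. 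That trick lets the paper perturb only one layer in four rather than one in two, yielding the sharper bound $\gamma_7\le\tfrac14\bigl(1-\prod_{k\ge1}(1-4^{-k})\bigr)\le1/10$. So your route is more direct and needs no daisy-containment observations beyond the obvious ones, while the paper's route is a bit more intricate but squeezes out a better constant.
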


\begin{proof} We begin by noting the well-known fact that $g(n,7)/{2^n}$, the minimal density of a subset of $\{0,1\}^n$ that intersects the vertex-set of any $Q_7$, is increasing as a function of $n$. Indeed, this is the standard averaging argument: let $\mathcal F \subset \{0,1\}^{n+1}$ be a family of size $g(n+1,7)$ that intersects the vertex-set of every $Q_7$. Consider $\mathcal F'=\{v\in\mathcal F: v_{n+1}=0\}$ and $\mathcal F''=\{v\in\mathcal F:v_{n+1}=1\}$. Then both $\mathcal F'$ and $\mathcal F''$, viewed as subsets of $\{0,1\}^n$, intersect the vertex-set of any $Q_7$, and so their sizes are at least $g(n,7)$. Hence $g(n+1,7)=|\mathcal F'|+|\mathcal F''|\geq 2g(n,7)$, implying the monotonicity property claimed. To prove the theorem, it therefore suffices to show that $g(n,7)/{2^n}$ is less than some constant $c<1/8$ for arbitrarily large values of $n$.
\par Let $n=4l-1$ for some positive integer $l$. We construct a small family $\mathcal G_n \subset \{0,1\}^n$ intersecting the vertex-set of any $Q_7$, as follows: at each level $r$ of $Q_n$ that is not congruent to 3 mod.\ 4, we take nothing, and at each level $r$ that is congruent to 3 mod.\ 4 we take the complement of the family $\mathcal F_n^r$ constructed in Proposition \ref{dual}, setting $q=4$. In other words,
$$\mathcal G_n=\bigcup_{0 \leq r' \leq l-1}\{A\in\{0,1\}^n: |A|=3+4r'\text{ and } A\notin\mathcal F_n^{3+4r'}\}.$$
\par We claim that $\mathcal G_n$ intersects the vertex-set of any $Q_7$. Indeed, given a copy of $Q_7$ and looking at its $2^{\text{nd}}$, $3^{\text{rd}}$, $4^{\text{th}}$ and $5^{\text{th}}$ layers, we notice that one of these four layers must be in a layer of $Q_n$ that is congruent to $3\mod4$. Call that layer $r$. By construction, the intersection of $\mathcal G_n$ with the $r$th layer of $Q_n$ is equal to the complement of $\mathcal F_n^r$, and therefore it is a family that intersects any $\mathcal D_r(2,6)$-daisy and any $\mathcal D_r(4,6)$-daisy.
\par Now, notice that layer 2 of a $Q_7$ sitting in layer $r$ of $Q_n$ is a $\mathcal D_r(2,7)$-daisy, which trivially contains a $D_r(2, 6)$-daisy, and so it intersects the complement of $\mathcal F_n^r$. Layer 3 of a $Q_7$ sitting in layer $r$ of $Q_n$ is a $\mathcal 
D_r(3,7)$-daisy, which contains a $\mathcal D_r(2,6)$-daisy (by putting a vertex of its petal-set into its stem), so it also intersects the complement of $\mathcal F_n^r$. Layer 4 of a $Q_7$ sitting in layer $r$ of $Q_n$ is a $\mathcal D_r(4,7)$-daisy, which trivially contains a $\mathcal D_r(4,6)$-daisy, and so it also intersects the complement of $\mathcal F_n^r$. And finally, layer 5 of a $Q_7$ that sits in layer $r$ of $Q_n$ is a $\mathcal D_r(5,7)$-daisy, which by the same argument as in the layer 3 case, contains a $\mathcal D_r(4,6)$-daisy, so it also intersects the complement of $\mathcal F_n^r$. Hence, $\mathcal G_n$ does indeed intersect the vertex-set of any $Q_7$, as claimed.
\par Let us now compute the density of $\mathcal G_n$. We have
$$\frac{|\mathcal G_n|}{2^n}=\frac{\binom{n}{3}-|\mathcal F_n^3|+\binom{n}{7}-|\mathcal F_n^7|+\dots+\binom{n}{4l-1}-|\mathcal F_n^{4l-1}|}{2^n}\leq\frac{\sum_{r'=0}^{l-1}\binom{n}{4r'+3}(1-\prod_{k=1}^{\infty}(1-4^{-k}))}{2^n}.$$
Noting that the (far) right-hand side is $\left(\frac{1}{4}+o(1)\right)\left(1-\prod_{k=1}^{\infty}(1-4^{-k})\right)$, and that $\prod_{k=1}^{\infty}(1-4^{-k})>0.6$, we obtain $|\mathcal G_n|/2^n\leq 1/10+o(1)$. Taking $l$ (and thus $n$) going to infinity, we obtain $\gamma_7\leq 1/10 <1/8$, as required.
\end{proof}

\section{The Tur\'an density of $Q_6$}
\par We now turn to $Q_6$, where the construction is rather more complicated. The key idea will be as follows: rather than considering each layer separately, as above (in the sense that what we take in a given layer does not relate in any way with what we take in a nearby layer), we take into account the interplay between consecutive
layers. In a certain sense (see the proof of Proposition \ref{2levels}), this will allow us to artificially `add' one more vector to our family. For example, in the proof of Proposition \ref{dual} we had a family of $q+2$ vectors of a certain form living in a 2-dimensional vector space over $\mathbb{F}_q$ and we used the fact that some two must be linearly dependent, but by this device of looking at consecutive layers we would get $q+3$ vectors instead. The precise details are in Proposition \ref{2levels} below, with the corresponding facts about linear dependence being in Lemma \ref{vectors3} and Lemma \ref{vectors4}. The actual way we link the two levels is an idea from \cite{gm}.
\par For $n\geq r\geq 1$ and $2\leq i\leq5$, we say that a family $\mathcal F \subset [n]^{(r-1)}\cup [n]^{(r)}$ is a \textit{copy of the layers $i$ and $i-1$ of a $Q_6$ in $Q_n$} if there exists sets $X$ of size $6$ and a set $Y$ of size $r-i$ with $X \cap Y = \emptyset$, such that $\mathcal F=\{Y\cup A:\ A \in X^{(i-1)}\}\cup \{Y\cup A:\ A \in X^{(i)}\}$. A {\em copy of two consecutive nontrivial layers of a $Q_6$ in $Q_n$} refers to one of these families for $2 \leq i \leq 5$. If $\mathcal{F} \subset [n]^{(r-1)} \cup [n]^{(r)}$, the {\em density-sum} of $\mathcal{F}$ is defined to be $|\mathcal{F} \cap [n]^{(r-1)}|/{n \choose r-1}+|\mathcal{F} \cap [n]^{(r)}|/{n \choose r}$, i.e., it is the sum of the densities of $\mathcal{F}$ inside its two respective layers.

\begin{lemma}Let $v_1,v_2,\dots,v_7$ be seven vectors in $\mathbb{F}_5^3$. Then there exist three of them that do not form a basis.
\begin{proof} Suppose for a contradiction that there exist seven vectors $v_1,v_2,\ldots,v_7 \in \mathbb{F}_5^3$, any three of which form a basis. By a change of basis, we may assume that $v_1=(1,0,0)$, $v_2=(0,1,0)$ and $v_3=(0,0,1)$. We now observe that the other vectors cannot have any entry equal to 0. Indeed, if for example $v_4=(a,b,0)$, then $v_1,v_2,v_4$ span a 2-dimensional space, thus they cannot form a basis. Therefore, rescaling if necessary, we may assume that one other vector is $(1,1,1)$, say $v_4$, and that the first entry in all the other three vectors (which are $v_5,v_6$ and $v_7$), is equal to 1.
\par Looking now at the second entries of the vectors $v_5,v_6,v_7$ we notice that no two of these entries can be equal (otherwise the corresponding vectors and $v_3$ would span a 2-dimensional space), and moreover that none of them can be equal to 1 (otherwise the corresponding vector, $v_3$ and $v_4$ would span a 2-dimensional space); and we have also already observed that they cannot be equal to zero. Therefore, these second entries of $v_5,v_6$ and $v_7$ must be, in some order, 2, 3 and 4. Without loss of generality let $v_5=(1,2,x)$, $v_6=(1,3,y)$ and $v_7=(1,4,z)$, where $x,y,z\in\mathbb F_5$. The above observation also applies to the third entries of these vectors, and therefore $x,y,z$ must also be $2,3,4$ in some order. Moreover, $x\neq2$, $y\neq3$ and $z\neq 4$, otherwise the corresponding vector lies in the span of $v_1$ and $v_4$. Therefore we only have two possibilities: either $x=3,y=4,z=2$, or else $x=4, y=2,z=3$.
\par In the first case, we have $v_6=(1,3,4)=2(1,4,2)-(1,0,0)=2v_7-v_1$, so $v_1,v_6,v_7$ do not form a basis. In the second case, we have $v_7=(1,4,3)=2(1,2,4)-(1,0,0)=2v_5-v_1$, so $v_1,v_5,v_7$ do not form a basis.
\par We have reached a contradiction in all cases, proving the lemma.
\end{proof}
\label{vectors3}
\end{lemma}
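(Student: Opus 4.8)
The plan is to argue by contradiction: assume there are seven vectors in $\mathbb{F}_5^3$, any three of which form a basis, and derive a contradiction by carefully tracking coordinates after a sequence of normalisations. Since we want any three to be linearly independent, in particular all seven vectors are nonzero, and no two are proportional. First I would apply a change of basis so that $v_1, v_2, v_3$ are the three standard unit vectors $(1,0,0)$, $(0,1,0)$, $(0,0,1)$; this is legitimate because $v_1, v_2, v_3$ form a basis by hypothesis. The key structural observation is that none of the remaining vectors $v_4, \dots, v_7$ can have a zero entry: if, say, the $i$th coordinate of $v_j$ ($j \geq 4$) vanished, then $v_j$ together with two of the unit vectors $v_1, v_2, v_3$ — namely the two whose index is not $i$ — would span only a $2$-dimensional subspace, contradicting that every triple is a basis.

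Next I would exploit the rescaling freedom. Each $v_j$ ($j \geq 4$) has all coordinates in $\mathbb{F}_5^\times = \{1,2,3,4\}$, and rescaling $v_j$ by a nonzero scalar does not change any of the basis conditions (it only reparametrises the same line). So I can rescale each of $v_4, \dots, v_7$ to have first coordinate $1$. Among these four vectors with first coordinate $1$, I would single out one — call it $v_4$ — and further observe, after possibly permuting and rescaling, that we may take $v_4 = (1,1,1)$: indeed any vector with all-nonzero entries and first entry $1$ can be brought to $(1,1,1)$ by rescaling the second and third \emph{standard basis vectors} $v_2, v_3$, which we are still free to do since rescaling $v_2$ or $v_3$ preserves all basis conditions. (One must just be a little careful to do this renormalisation of $v_2, v_3$ before fixing the coordinates of the other vectors; I would present it in that order.) Now $v_5, v_6, v_7$ each have first coordinate $1$ and all entries nonzero.

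Then I would analyse the second coordinates of $v_5, v_6, v_7$. No two of these can be equal: if the $k$th coordinates of $v_i$ and $v_j$ agreed (for $i,j \in \{5,6,7\}$), then $v_i - v_j$ would be supported on the other two coordinates, but we will see this forces a degenerate triple — more directly, $v_i, v_j$ and the unit vector $v_m$ with $m$ equal to that shared coordinate would be dependent (since $v_i - v_j \in \mathrm{Span}(v_m, \text{the third unit vector})$... ). The cleanest phrasing: if the second coordinates of $v_i$ and $v_j$ coincide, then $v_i - v_j$ lies in the span of $v_1$ and $v_3$, so $\{v_i, v_j, v_2\}$... no — I would instead say directly that $v_i, v_j, v_3$ all have the same ratio structure in the first two coordinates only if... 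I will phrase it as: equal $k$th coordinates of $v_i,v_j$ implies $v_i-v_j$ has a zero $k$th coordinate while $v_i$ itself does not, forcing $\{v_i, v_j\}$ and one unit vector to be dependent. Moreover no second coordinate of $v_5,v_6,v_7$ equals $1$, since a vector $v_i$ with second coordinate $1$ would have $v_i - v_4$ supported on coordinates $1$ and $3$, hence $\{v_i, v_4, v_2\}$ dependent. So $\{$second coordinates of $v_5,v_6,v_7\} = \{2,3,4\}$, and likewise the third coordinates form $\{2,3,4\}$; and by the same $v_4$-comparison argument the second and third coordinates of each $v_i$ differ (no $v_i$ has equal second and third entries, else $v_i - v_4$ is a multiple of $v_1$). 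Labelling $v_5 = (1,2,x)$, $v_6 = (1,3,y)$, $v_7 = (1,4,z)$ with $\{x,y,z\} = \{2,3,4\}$ and $x \neq 2$, $y \neq 3$, $z \neq 4$, only two cases survive: $(x,y,z) = (3,4,2)$ or $(x,y,z) = (4,2,3)$. In the first case $2v_7 - v_1 = 2(1,4,2) - (1,0,0) = (1,3,4) = v_6$, so $\{v_1, v_6, v_7\}$ is dependent; in the second case $2v_5 - v_1 = (1,4,3) = v_7$, so $\{v_1, v_5, v_7\}$ is dependent. Either way we contradict the hypothesis, completing the proof.

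The main obstacle — really the only subtle point — is bookkeeping of the normalisations: one must be careful that the changes of basis and rescalings used to fix $v_1, v_2, v_3$ and then $v_4 = (1,1,1)$ and then the first coordinates of $v_5, v_6, v_7$ are carried out in a consistent order and that each preserves the property ``every triple is a basis'' (changes of basis trivially do; rescaling a single $v_i$ does, since it does not change the line spanned). Once the coordinates are pinned down to the two-case analysis, everything is a finite check. This is exactly the argument the lemma statement seems to invite, and it parallels the linear-dependence arguments already used in the proof of Proposition~\ref{dual}.
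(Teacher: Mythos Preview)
Your approach is essentially identical to the paper's: the same normalisations to $v_1,v_2,v_3=e_1,e_2,e_3$, then $v_4=(1,1,1)$ and first entries of $v_5,v_6,v_7$ equal to $1$, the same reduction to $\{x,y,z\}=\{2,3,4\}$ with $x\neq 2$, $y\neq 3$, $z\neq 4$, and the same two-case check $2v_7-v_1=v_6$ or $2v_5-v_1=v_7$ at the end.

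Two small slips to fix when you write it up. First, when the second entry of some $v_i$ equals $1$, the dependent triple is $\{v_i,v_4,v_3\}$, not $\{v_i,v_4,v_2\}$: since the first entries of $v_i$ and $v_4$ also agree, $v_i-v_4$ is actually a scalar multiple of $v_3$, not merely supported on coordinates $1$ and $3$ (and from the latter alone no dependence with $v_2$ would follow). Second, when the second and third entries of $v_i$ coincide (say both equal $s$), the difference $v_i-v_4=(0,s-1,s-1)$ is a multiple of $v_2+v_3$, not of $v_1$; the correct justification is that $v_i=(1-s)v_1+s v_4\in\mathrm{Span}(v_1,v_4)$, so $\{v_1,v_4,v_i\}$ is dependent, which is exactly how the paper phrases it.
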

\begin{lemma}Let $v_1,v_2,\dots,v_7$ be seven vectors in $\mathbb{F}_5^4$. Then there exist four of them that do not form a basis.
\begin{proof} Suppose for a contradiction that there exist seven vectors $v_1,v_2,\ldots,v_7 \in \mathbb{F}_5^4$, any four of which form a basis. By a change of basis, we may assume that four of them are $v_1=(1,0,0,0)$, $v_2=(0,1,0,0)$, $v_3=(0,0,1,0)$ and $v_4=(0,0,0,1)$. As before, since the remaining vectors cannot have any entry equal to 0, we may also assume that $v_5=(1,1,1,1)$ and that the first entry of both $v_6$ and $v_7$ is 1. Moreover, by (essentially) the same argument as in Lemma \ref{vectors3}, under the above assumptions, $v_6$ and $v_7$ cannot have two entries equal, and neither can they have any entry other than the first being equal to 1, thus their second, third and fourth entries are $2,3,4$ in some order. Without loss of generality we may assume that $v_6=(1,2,3,4)$ and that $v_7=(1,a,b,c)$, where $\{a,b,c\}=\{2,3,4\}$. Furthermore, $a\neq2$, otherwise $v_6,v_7,v_3,v_4$ would span a space of dimension 3. Similarly, $b\neq3$ and $c\neq4$. This leaves us with only two options, $v_7=(1,3,4,2)$ or $v_7=(1,4,2,3)$.
\par In the first case, we have $2v_7=(2,1,3,4)=v_6+v_1-v_2$, thus $v_1,v_2,v_6,v_7$ do not form a basis. In the second case, we have $2v_6=(2,4,1,3)=v_7+v_1-v_3$, thus $v_1,v_3,v_6,v_7$ do not form a basis.
\par We have reached a contradiction in all cases, proving the lemma.
\end{proof}
\label{vectors4}
\end{lemma}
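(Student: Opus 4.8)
The plan is to argue by contradiction, just as in the proof of Lemma~\ref{vectors3}. So suppose $v_1,\dots,v_7 \in \mathbb{F}_5^4$ have the property that every four of them form a basis. Since in particular any four of them are linearly independent, none of the $v_i$ is zero, and after a change of basis we may assume $v_1,v_2,v_3,v_4$ are the standard basis vectors $e_1,e_2,e_3,e_4$. The first observation is that none of $v_5,v_6,v_7$ can have a zero coordinate: if the $j$-th coordinate of $v_i$ (for $i\geq 5$) vanished, then $v_i$ together with the three vectors $e_k$, $k\neq j$, would all lie in the coordinate hyperplane $\{x_j=0\}$ and so could not form a basis. Hence every coordinate of $v_5,v_6,v_7$ lies in $\mathbb{F}_5^{\times}=\{1,2,3,4\}$, and after rescaling we may assume $v_5=(1,1,1,1)$ and that $v_6$ and $v_7$ each have first coordinate $1$.

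The heart of the argument is to pin down $v_6$ and $v_7$ completely. For two distinct coordinates $j\neq \ell$ and any two of the vectors $u,w\in\{v_5,v_6,v_7\}$, the set $\{u,w\}\cup\{e_k: k\notin\{j,\ell\}\}$ is a basis if and only if the images of $u$ and $w$ in the quotient by $\mathrm{Span}(e_k: k\notin\{j,\ell\})$ are linearly independent, i.e.\ if and only if $u_jw_\ell\neq u_\ell w_j$. Taking $u=v_5$ (the all-ones vector), this says that $v_6$, and likewise $v_7$, has all its coordinates distinct; since the first coordinate is $1$ and the rest lie in $\{1,2,3,4\}$, the last three coordinates of $v_6$ (and of $v_7$) form a permutation of $(2,3,4)$. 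Permuting the coordinates $2,3,4$ if necessary (which permutes $e_2,e_3,e_4$ while fixing $e_1$ and $v_5$), we may assume $v_6=(1,2,3,4)$. Applying the same criterion with $u=v_6$, $w=v_7$ and $j=1$ gives $(v_7)_\ell\neq(v_6)_\ell$ for $\ell=2,3,4$, so the last three coordinates of $v_7$ form a derangement of $(2,3,4)$; there are exactly two such, leaving $v_7=(1,3,4,2)$ or $v_7=(1,4,2,3)$.

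It then remains to see that each of these two cases already contains a dependent quadruple, which is a quick explicit check: in the first case $2v_7=(2,1,3,4)=v_6+e_1-e_2$, so $\{v_1,v_2,v_6,v_7\}$ is dependent, while in the second case $2v_6=(2,4,1,3)=v_7+e_1-e_3$, so $\{v_1,v_3,v_6,v_7\}$ is dependent --- a contradiction in both cases. I do not expect any genuine obstacle here: the only point requiring care is the bookkeeping of the successive normalizations (the change of basis fixing $e_1,\dots,e_4$, the rescalings of $v_5,v_6,v_7$, and the permutation of coordinates $2,3,4$), to check that they are mutually compatible and leave exactly enough residual symmetry to fix $v_6=(1,2,3,4)$ while still pinning $v_7$ down to only two possibilities --- after which the final linear relations can simply be written out. (As a side remark, this lemma and Lemma~\ref{vectors3} together amount to the fact that a set of points of $\mathrm{PG}(k-1,5)$ no $k$ of which lie on a hyperplane has at most $6$ points when $k=3$ or $k=4$; but the self-contained argument above is all we shall need.)
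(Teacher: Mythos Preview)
Your proof is correct and follows essentially the same approach as the paper's: both normalize $v_1,\dots,v_4$ to the standard basis, force $v_5=(1,1,1,1)$ and the first coordinates of $v_6,v_7$ to be $1$, argue that the remaining coordinates of $v_6,v_7$ are permutations of $(2,3,4)$, reduce (via coordinate permutation) to $v_6=(1,2,3,4)$ and $v_7$ a derangement of it, and then exhibit the same explicit dependencies $2v_7=v_6+e_1-e_2$ and $2v_6=v_7+e_1-e_3$. Your $2\times 2$ minor criterion $u_jw_\ell\neq u_\ell w_j$ is just a slightly more explicit phrasing of the paper's ``span a space of dimension $3$'' observations.
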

\begin{proposition}
\label{2levels} 
Let $n\geq r\geq 1$. Then there exist two families $\mathcal F_1\subset [n]^{(r)}$ and $\mathcal F_2\subset [n]^{(r-1)}$ such that $\mathcal F_1\cup\mathcal F_2$ does not contain a copy of the layers $i$ and $i-1$ of a $Q_6$ for any $2\leq i\leq 5$, and $|\mathcal F_1|/\binom{n}{r}+|\mathcal F_2|/\binom{n}{r-1}>1.52$.
\end{proposition}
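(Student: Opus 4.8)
The plan is to mimic the structure of the proof of Proposition~\ref{dual}, but over $\mathbb{F}_5$ and exploiting the two-layer trick suggested by Lemma~\ref{vectors3} and Lemma~\ref{vectors4}. First I would set up an analogue of the monotonicity/averaging argument: if $\ex_2(n,r)$ denotes the largest possible density-sum of a pair $(\mathcal{F}_1,\mathcal{F}_2)$ with $\mathcal{F}_1 \subset [n]^{(r)}$, $\mathcal{F}_2 \subset [n]^{(r-1)}$ avoiding all copies of two consecutive nontrivial layers of a $Q_6$, then by deleting a uniformly random ground-set element one shows that $\ex_2(n,r)/(\text{something})$ is monotone in the appropriate sense, so that it suffices to exhibit good pairs for arbitrarily large $n$. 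I would also use the same layer-shifting averaging as at the end of the proof of Proposition~\ref{dual} (the inequality $(r+1)\ex(n+1,r+1)\le (n+1)\ex(n,r)$, suitably adapted to density-sums) so that it is enough to construct the pair for one convenient pair of levels $(r-1,r)$ with $r$ large, then pass to the limit.

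Next comes the construction. Take $n = 5^r - 1$ and identify $[n]$ with $\mathbb{F}_5^r \setminus \{0\}$. I would let $\mathcal{F}_1$ be (essentially) the set of all bases of $\mathbb{F}_5^r$ among $r$-subsets, and $\mathcal{F}_2$ the set of all linearly independent $(r-1)$-subsets, i.e. all $(r-1)$-subsets that span an $(r-1)$-dimensional space. The density of $\mathcal{F}_1$ inside $[n]^{(r)}$ is $> \prod_{k=1}^{r}(1-5^{-k})$ by the estimate quoted as \eqref{eq:bases-estimate} (with $q=5$), and the density of $\mathcal{F}_2$ inside $[n]^{(r-1)}$ is even larger (independent $(r-1)$-sets are a strictly larger fraction than bases), certainly also $> \prod_{k=1}^{\infty}(1-5^{-k})$. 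Since $\prod_{k=1}^{\infty}(1-5^{-k}) > 0.76$, the density-sum exceeds $1.52$, as required. The device linking the two layers (the idea from~\cite{gm}) is what makes the two-layer family daisy-free despite each individual layer not being daisy-free on its own in the relevant sense.

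The heart of the argument is verifying that $\mathcal{F}_1 \cup \mathcal{F}_2$ contains no copy of layers $i$ and $i-1$ of a $Q_6$, for each $2 \le i \le 5$. Such a copy has a stem $Y$ of size $r-i$ (lying inside the top layer) and a $6$-set $X$ disjoint from $Y$; the $(i-1)$-sets $Y \cup A$ ($A \in X^{(i-1)}$) live in layer $r-1$ and the $i$-sets $Y \cup A'$ ($A' \in X^{(i)}$) live in layer $r$. If all of these are (linearly) independent, then passing to the quotient $\mathbb{F}_5^r / \langle Y \rangle$ — a space of dimension $i+1$ — we obtain $6$ vectors $\bar x_1,\dots,\bar x_6$ (the images of the elements of $X$) with the property that every $i$ of them are independent \emph{and} every $(i-1)$ of them are independent in an $(i+1)$-dimensional space. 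The point of using \emph{both} layers is exactly that this is stronger than what a single-layer daisy would give: combining the two conditions forces that every $(i-1)$-subset of these $6$ vectors, together with a suitable further vector coming from the ``missing'' coordinate direction, behaves like a spanning set, effectively promoting $6$ vectors in dimension $i+1$ to a configuration equivalent to $7$ vectors in dimension $i$ with every $i$ of them independent — which is precisely the hypothesis ruled out by Lemma~\ref{vectors3} (case $i+1=4$, i.e.\ $i=3$) and Lemma~\ref{vectors4} (case $i=4$), with the boundary cases $i=2$ and $i=5$ handled by the elementary projective-plane / dual arguments already used in the proof of Proposition~\ref{dual}. I expect this reduction — correctly formalising how the extra layer manufactures the seventh vector, and checking the bookkeeping in the four cases $i=2,3,4,5$ — to be the main obstacle; everything else is the now-standard blow-up and averaging machinery.
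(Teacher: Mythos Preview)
Your construction has a genuine gap: taking $\mathcal{F}_2$ to be \emph{all} linearly independent $(r-1)$-subsets does not work, because the condition it imposes is vacuous given the condition from $\mathcal{F}_1$. Concretely, if $Y\cup A'$ is a basis of $\mathbb{F}_5^r$ for every $i$-subset $A'\subset X$, then $Y\cup A$ is automatically linearly independent for every $(i-1)$-subset $A\subset X$; so a copy of layers $i$ and $i-1$ lies in your $\mathcal{F}_1\cup\mathcal{F}_2$ if and only if its top layer alone lies in $\mathcal{F}_1$, i.e.\ if and only if the family of bases contains a $\mathcal{D}_r(i,6)$-daisy. For $i=2$ this certainly happens: in the quotient $\mathbb{F}_5^r/\langle Y\rangle\cong\mathbb{F}_5^2$ (note the dimension is $i$, not $i+1$ as you wrote) one can choose six nonzero vectors, one on each of the $q+1=6$ lines through the origin, and lift to a $\mathcal{D}_r(2,6)$-daisy consisting entirely of bases. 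Thus your $\mathcal{F}_1\cup\mathcal{F}_2$ \emph{does} contain a copy of layers $1$ and $2$ of a $Q_6$. The phrase ``a suitable further vector coming from the missing coordinate direction'' is precisely what is absent from your construction: nothing in your $\mathcal{F}_2$ singles out any such vector.

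The paper supplies this seventh vector by the device from~\cite{gm}: one \emph{fixes} a nonzero $w\in\mathbb{F}_5^r$ and takes $\mathcal{F}_2$ to be those $(r-1)$-sets which, \emph{together with $w$}, form a basis. Then membership of the lower layer in $\mathcal{F}_2$ says that $\{w\}\cup Y\cup A$ is a basis for every $(i-1)$-subset $A\subset X$; combined with the layer-$r$ condition, in the $i$-dimensional quotient $\mathbb{F}_5^r/\langle Y\rangle$ one obtains seven vectors $\bar w,\bar x_1,\dots,\bar x_6$ of which every $i$ form a basis. For $i=3,4$ this is ruled out by Lemmas~\ref{vectors3} and~\ref{vectors4}, and the cases $i=2,5$ are the easy projective/pigeonhole arguments. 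The density of this smaller $\mathcal{F}_2$ is still $>\prod_{k=1}^{r-1}(1-5^{-k})$, so the numerical bound $>1.52$ goes through. Your monotonicity, blow-up and averaging outline is otherwise correct and matches the paper.
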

\begin{proof} Let $l(n,6,r)$ denote the maximum density-sum of a family $\mathcal F \subset [n]^{(r-1)} \cup [n]^{(r)}$, that does not contain a copy of layers $i$ and $i-1$ of a $Q_6$, for any $2\leq i\leq 5$. In other words, $$l(n,6,r)=\max\{{|\mathcal F_1|/\binom{n}{r}+|\mathcal F_2|/\binom{n}{r-1}}: \mathcal F_1\subset [n]^{(r)}, \mathcal F_2\subset [n]^{(r-1)} \text{ and $\mathcal F_1\cup\mathcal F_2$ does not contain}$$ a copy of the layers $i$ and $i-1$ of a $Q_6$ for any $2\leq i\leq 5$\}.
\par We first show that $l(n,6,r)$ is decreasing as a function of $n$. Let $\mathcal F=\mathcal F_1\cup\mathcal F_2$ be a family not containing the forbidden configuration that achieves the maximal density-sum subject to this condition, with $\mathcal{F}_1 \subset [n]^{(r)}$ and $\mathcal{F}_2 \subset [n]^{(r-1)}$, i.e., $|\mathcal F_1|/\binom{n}{r}+|\mathcal F_2|/\binom{n}{r-1}=l(n,6,r)$. Note that for every $i\in[n]$, the family $\mathcal F_i=\{A\in\mathcal F:i\notin A\}$ also does not contain the forbidden configuration, and has ground-set of size $n-1$, so its density-sum is at most $l(n-1,6,r)$. Summing the density-sums over all $i$ we get $l(n,6,r)\leq l(n-1,6,r)$, as required.
\par The second observation we make is that $l(n,6,r)\geq l(n+1,6,r+1)$. Indeed, given a family $\mathcal F \subset [n+1]^{(r+1)} \cup [n+1]^{(r)}$ not containing the forbidden configuration, for each $i \in [n+1]$ the family $\mathcal F^i=\{A\setminus\{i\}: i\in A, A\in\mathcal F\}$ satisfies the same condition with $(n,r)$ in place of $(n+1,r+1)$, so the density-sum of $\mathcal{F}^i$ is at most $l(n,6,r)$. Taking $\mathcal{F}$ with maximal density-sum and then summing over all $i$, we obtain $l(n+1,6,r+1)\leq l(n,6,r)$.
\par We will now show that for fixed $r$ and arbitrarily large $n$ there exist a family that is free of any two consecutive nontrivial layers of a $Q_6$ and has density-sum at least $(1-\binom{r}{2}/(5^r-1))\prod_{k=1}^{r}(1-5^{-k})+(1-\binom{r-1}{2}/(5^r-1))\prod_{k=1}^{r-1}(1-5^{-k})$. This, together with the two monotonicity observations above, will yield $l(n,6,r)\geq2\prod_{k=1}^{\infty}(1-5^{-k})>1.52$, as in the proof of Theorem \ref{thm:q7}.
\par To start with, let $n=5^r-1$ and identify $[n]$ with $\mathbb F_5^r\setminus\{0\}$. Fix any vector $w \in \mathbb F_5^r\setminus\{0\}$ and let $\mathcal F_1 \subset (\mathbb{F}_5^r\setminus\{0\})^{(r)}$ be the set of all bases, and $\mathcal F_2 \subset (\mathbb{F}_5^r \setminus \{0\})^{(r-1)}$ be the set of all $(r-1)$-sets that together with $w$ form a basis. The density of $\mathcal F_1$ is greater than $\prod_{k=1}^r(1-5^{-k})$, and the density of $\mathcal F_2$ is greater than $\prod_{k=1}^{r-1}(1-5^{-k})$, thus the density-sum of $\mathcal F=\mathcal F_1\cup\mathcal F_2$ is greater than $\prod_{k=1}^{r}(1-5^{-k})+\prod_{k=1}^{r-1}(1-5^{-k})$.
\par Suppose for a contradiction that $\mathcal F$ contains a copy of the layers 1 and 2 of a $Q_6$. This means that there exist 6 vectors $v_1, v_2,\dots,v_6$ and $r-2$ other vectors, say $w_1,\dots,w_{r-2}$, such that $\{x,y,w_1,\dots,w_{r-2}\}$ is a basis for any two distinct $x,y\in\{w,v_1,\dots,v_6\}$. Let $W$ be the span of $\{w_1,\dots,w_{r-2}\}$, which by assumption must be an $(r-2)$-dimensional vector space over $\mathbb F_5$. Now, the cosets $w+W, v_1+W,\dots v_6+W$ can be viewed as 7 vectors in $\mathbb F_5^2$ of which any 2 form a basis. Without loss of generality, by changing basis if necessary we may assume that two of them are $(1,0)$ and $(0,1)$. This means that no entry in the remaining 5 vectors can be 0, so by rescaling we may assume that another of the five vectors is $(1,1)$ and all the remaining four have first entry equal to 1. In view of this, we cannot have the second entry of any of the remaining four vectors being a 0 or a 1, thus it can only be 2, 3 or 4. But this is clearly impossible since we have four distinct vectors, a contradiction.
\par Now suppose that $\mathcal F$ contains a copy of the layers 4 and 5 of a $Q_6$. This means that there exists 6 vectors $v_1,v_2,\dots,v_6$ and other $r-5$ vectors, say $w_1,\dots w_{r-5}$, such that for any pairwise distinct $x,y,z,t,u\in\{w,v_1,\dots v_6\}$, the set $\{x,y,z,t,u,w_1,\dots w_{r-5}\}$ forms a basis. As before, this implies the existence of 7 vectors in $\mathbb F_5^5$ with the property that any 5 of them form a basis. Again, we may assume that the first five of them are $(1,0,0,0,0)$, $(0,1,0,0,0)$, $(0,0,1,0,0)$, $(0,0,0,1,0)$ and $(0,0,0,0,1)$. Since the other two remaining vectors can have no zero component, by rescaling we may also assume that one of the other vectors is $(1,1,1,1,1)$ and the other one has the first component 1. We now observe that no two components in the seventh vector can be equal, thus they can now only be 2, 3, or 4. However, we have 4 slots to fill in the seventh vector, a contradiction.
\par Finally, if $\mathcal F$ contains a copy of layers 2 and 3 of a $Q_6$ then there would exist seven vectors in $\mathbb F_5^3$ of which any three form a basis, which is impossible by Lemma \ref{vectors3}. Similarly, if $\mathcal F$ contains a copy of layers 3 and 4 of a $Q_6$ then there would exist seven vectors in $\mathbb F_5^4$ of which any four form a basis, which is impossible by Lemma \ref{vectors4}.
\par Now we perform the same blow-up procedure as before. Let $n=m(5^r-1)$ for some arbitrary positive integer $m$. We partition $[n]$ into $5^r-1$ classes of equal size, $C_1,\ldots,C_{5^r-1}$ say, and consider the family $\mathcal G_1$ consisting of all $r$-element subsets of $[n]$ of the form $\{x_1,\dots,x_r\}$, where $x_i \in C_{j_i}$ for all $i \in [r]$, for $\{j_1,j_2,\dots,j_r\}$ some element of $\mathcal F_1$, and the family $\mathcal G_2$ consisting of all the $(r-1)$-element subsets of $[n]$ of the form $\{x_1,\dots,x_{r-1}\}$, where $x_i\in C_{j_i}$ for all $i\in[r-1]$, for $\{j_1,\dots,j_{r-1}\}$ some element of $\mathcal F_2$. Since the family $\mathcal F=\mathcal F_1\cup \mathcal F_2$ is free of any two consecutive nontrivial layers of $Q_6$, so is the `blow-up' family $\mathcal G=\mathcal G_1\cup\mathcal G_2$. Moreover, observing that the probability that a uniformly random $t$-element subset of $[n]$ has at least two points in some $C_i$ is at most $\frac{{t \choose 2}}{5^r-1}$, we see that the density of $\mathcal G_1$ is at least $\left(1-\binom{r}{2}/(5^r-1)\right)|\mathcal F_1|>\left(1-\binom{r}{2}/(5^r-1)\right)\prod_{k=1}^r(1-5^{-k})$, and that the density of $\mathcal G_2$ is at least  $\left(1-\binom{r-1}{2}/(5^r-1)\right)|\mathcal F_2|>\left(1-\binom{r-1}{2}/(5^r-1)\right)\prod_{k=1}^r(1-5^{-k})$. Hence, the density-sum of $\mathcal G_1\cup\mathcal G_2$ is at least $\left(1-\binom{r}{2}/(5^r-1)\right)\prod_{k=1}^r(1-5^{-k})+\left(1-\binom{r-1}{2}/(5^r-1\right)\prod_{k=1}^{r-1}(1-5^{-k})$, which finishes the proof, as $m$ can be taken to be arbitrarily large.
\end{proof}
\begin{theorem} The vertex-Tur\'an density of $Q_6$ is greater than $6/7$. Equivalently, $\gamma_6<1/7$.
\end{theorem}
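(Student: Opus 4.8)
The plan is to follow the proof of Theorem~\ref{thm:q7} line for line, but invoking Proposition~\ref{2levels} in place of Proposition~\ref{dual} and working with \emph{pairs} of consecutive layers rather than with single layers. As there, the usual averaging argument gives that $g(n,6)/2^n$ is non-decreasing in $n$, so $\gamma_6 = \lim_n g(n,6)/2^n$ and it suffices to produce, for arbitrarily large $n$, a set $\mathcal{G}_n \subseteq \{0,1\}^n$ that meets the vertex-set of every copy of $Q_6$ in $Q_n$ and satisfies $|\mathcal{G}_n|/2^n \le c$ for some constant $c < 1/7$.

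To construct $\mathcal{G}_n$, for each integer $t \ge 0$ with $4t+1 \le n$ apply Proposition~\ref{2levels} with $r = 4t+1$, obtaining families $\mathcal{F}_1^{\,t} \subseteq [n]^{(4t+1)}$ and $\mathcal{F}_2^{\,t} \subseteq [n]^{(4t)}$ whose union contains no copy of two consecutive nontrivial layers of a $Q_6$ and has density-sum greater than $1.52$. Let $\mathcal{G}_n$ consist, at each level $\equiv 0 \pmod 4$ (say level $4t$), of the complement $[n]^{(4t)} \setminus \mathcal{F}_2^{\,t}$; at each level $\equiv 1 \pmod 4$ (say level $4t+1$), of the complement $[n]^{(4t+1)} \setminus \mathcal{F}_1^{\,t}$; and of nothing at the levels $\equiv 2, 3 \pmod 4$. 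Since each level lies in a unique pair $\{4t, 4t+1\}$, this is well defined.

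Next one checks that $\mathcal{G}_n$ meets every copy of $Q_6$, and this is where the choice of period $4$ is exactly what is needed. Take a copy of $Q_6$ whose least vertex lies at level $r_0$, so its layers $1, \dots, 5$ occupy levels $r_0+1, \dots, r_0+5$. Exactly one of the four consecutive integers $r_0+1, r_0+2, r_0+3, r_0+4$ is divisible by $4$; write it as $r_0 + i - 1$ with $i \in \{2,3,4,5\}$. Then layers $i-1$ and $i$ of this $Q_6$ lie at the consecutive levels $r_0+i-1 \equiv 0$ and $r_0+i \equiv 1 \pmod 4$, and together they form a copy of the layers $i-1$ and $i$ of a $Q_6$ in $Q_n$. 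Putting $4t = r_0+i-1$, Proposition~\ref{2levels} tells us that $\mathcal{F}_1^{\,t} \cup \mathcal{F}_2^{\,t}$ does not contain this copy, so one of its vertices lies in the complement, i.e.\ in $\mathcal{G}_n$; hence $\mathcal{G}_n$ meets this $Q_6$.

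It remains to bound $|\mathcal{G}_n|/2^n$, and I expect this to be the one genuinely delicate step (in Theorem~\ref{thm:q7} the contribution of each active layer could be counted directly, whereas here the two layers of a pair have honestly different sizes). By Proposition~\ref{2levels}, on the pair of levels $(4t, 4t+1)$ the density-sum of $\mathcal{G}_n$ is less than $2 - 1.52 = 0.48$; writing $c_t, d_t$ for the densities of $\mathcal{G}_n$ inside layers $4t$ and $4t+1$, we have $c_t, d_t \ge 0$ and $c_t + d_t < 0.48$, so
$$c_t \binom{n}{4t} + d_t \binom{n}{4t+1} \ \le\ (c_t + d_t)\max\!\left(\binom{n}{4t}, \binom{n}{4t+1}\right) \ <\ 0.48 \max\!\left(\binom{n}{4t}, \binom{n}{4t+1}\right).$$
Summing over $t$ and using the standard facts that each residue class modulo $4$ carries total weight $(\tfrac14 + o(1))2^n$ and that consecutive binomial coefficients are asymptotically equal throughout the bulk --- which together give $\sum_t \max(\binom{n}{4t}, \binom{n}{4t+1}) = (\tfrac14 + o(1))2^n$ --- we obtain $|\mathcal{G}_n|/2^n < \tfrac14 \cdot 0.48 + o(1) = 0.12 + o(1)$. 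Letting $n \to \infty$ gives $\gamma_6 \le 0.12 < 1/7$, that is, $\lambda(Q_6) \ge 0.88 > 6/7$, as required. The only work beyond transcribing the template of Theorem~\ref{thm:q7} is therefore this binomial-coefficient estimate, carried out with a routine bulk/tail split.
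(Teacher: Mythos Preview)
Your proof is correct and follows essentially the same approach as the paper's: the paper places the active pair of levels at residues $2,3 \pmod 4$ rather than $0,1$, and organises the density estimate by bounding $|\mathcal F_1|+|\mathcal F_2|$ from below via $\min\bigl(\binom{n}{r-1},\binom{n}{r}\bigr)$ (splitting at $n/2$) instead of bounding the complements from above via the $\max$, but these are cosmetic differences and both routes yield $\gamma_6 \le 0.12$.
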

\begin{proof}
We begin by noting that $g(n,6)/{2^n}$, the minimal size of a subset of $\{0,1\}^n$ that intersects any vertex-set of a $Q_6$, is increasing. Indeed, the proof of this is the same as for $Q_7$ (at the start of the proof of Theorem \ref{thm:q7}.)
\par Let $n=8s-1$ for some positive integer $s$. We construct a small family $\mathcal{G}_n \subset \{0,1\}^n$ intersecting the vertex-set of any $Q_6$, as follows: on levels $r$ that are congruent to 0 or 1 mod 4 we take nothing, while on levels $r$ and $r-1$ for $r$ congruent to 3 mod 4, we take the complements of two families $\mathcal F_1^r\subset[n]^{(r)}$ and $\mathcal F_2^{r-1}\subset[n]^{(r-1)}$ constructed in Proposition \ref{2levels}.
\par First, if a copy of $Q_6$ starts in layer $i$ of $Q_n$, then one of the numbers $i+1, i+2, i+3, i+4$, say $i'$, is congruent to $2\mod4$, which implies that its (nontrivial) layers that live in layers $i'$ and $i'+1$ of $Q_n$ intersect $([n]^{(i'+1)}\setminus\mathcal F_1^{i'+1})\cup ([n]^{(i')}\setminus\mathcal F_2^{i'})$, and consequently $\mathcal G_n$. Therefore $\mathcal G_n$ intersects every copy of $Q_6$.
\par The density of $\mathcal G_n$ is: $$\frac{|\mathcal G_n|}{2^n}=\frac{\sum_{i=0}^{2s-1}\binom{n}{4i+2}+\binom{n}{4i+3}-|\mathcal F_1^{4i+3}|-|\mathcal F_2^{4i+2}|}{2^n}=\frac{\sum_{i=0}^{2s-1}\binom{n}{4i+2}+\binom{n}{4i+3}}{2^n}-\frac{\sum_{i=0}^{2s-1}|\mathcal F_1^{4i+3}|+|\mathcal F_2^{4i+2}|}{2^n},$$ which is equal to $\frac{1}{2}+o(1)-\frac{\sum_{i=0}^{2s-1}|\mathcal F_1^{4i+3}|+|\mathcal F_2^{4i+2}|}{2^n}=\frac{1}{2}+o(1)-\frac{\sum_{i=0}^{s-1}|\mathcal F_1^{4i+3}|+|\mathcal F_2^{4i+2}|}{2^n}-\frac{\sum_{i=s}^{2s-1}|\mathcal F_1^{4i+3}|+|\mathcal F_2^{4i+2}|}{2^n}$.
\par However, since for $r\leq 4s-1<n/2$ we have $\binom{n}{r}>\binom{n}{r-1}$, we obtain $\frac{|\mathcal F_1^r|+|\mathcal F_2^{r-1}|}{\binom{n}{r-1}}>\frac{|\mathcal F_1^r|}{\binom{n}{r}}+\frac{|\mathcal F_2^{r-1}|}{\binom{n}{r-1}}>1.52$. Similarly, for  $r\geq 4s+1>n/2+1$, we obtain $\frac{|\mathcal F_1^r|+|\mathcal F_2^{r-1}|}{\binom{n}{r}}>\frac{|\mathcal F_1^r|}{\binom{n}{r}}+\frac{|\mathcal F_2^{r-1}|}{\binom{n}{r-1}}>1.52$. Putting everything together, we obtain: $$\frac{|\mathcal G_n|}{2^n}\leq\frac{1}{2}+o(1)-1.52\frac{\sum_{i=0}^{s-1}\binom{n}{4i+2}}{2^n}-1.52\frac{\sum_{i=s}^{2s-1}\binom{n}{4i+3}}{2^n}=\frac{1}{2}+o(1)-1.52(1/8+o(1))-1.52(1/8+o(1)).$$
\par Finally, letting $s\rightarrow\infty$ we obtain $\gamma_6\leq 1/2-1.52/4=0.12<1/7$, as required.
\end{proof}
\section{Concluding remarks}
\par The remaining cases are $d=3,4,5$. Perhaps the most interesting of these is $d=3$, as that is the
one where the methods of this paper seem to have the least chance of succeeding. 
\par One could try to attack these questions by trying to generalize the linking of two consecutive levels to more consecutive levels. For example, in the above we have considered those $r$-sets that form a basis of a certain vector space, and then we also had those $(r-1)$-sets which, together with a fixed vector $v$, form a basis -- and this was what allowed us to `gain a vector for free'. What if we add in those $(r-2)$-sets which, together with $v$ and one other fixed vector $w$ (with $v$ and $w$ linearly independent), form a basis? Sadly, this does not allow us to gain two vectors, since we would \textit{not} know that original vectors with $w$ alone form bases. We do not see how to get around this obstacle.
\par It would also be of great interest to determine the growth speed of $\gamma_d$. It is known from
\cite{eil} that the decay is exponential in $d$, and it is trivial that $\gamma_d \geq 1/2^d$, but
could it be true that this trivial lower bound gives the right speed? In other words, could it be
that $\gamma_d = (1/2 +o(1))^d$?

\Addresses
\end{document}